\newtheorem{theorem}{Theorem}[section]
\newtheorem{lemma}[theorem]{Lemma}
\newtheorem{proposition}[theorem]{Proposition}
\theoremstyle{definition}
\newtheorem{remark}[theorem]{Remark}
\numberwithin{equation}{section}
\newcommand{\mi}{\textsc{mi}}
\newcommand*{\rom}[1]{\expandafter\@slowromancap\romannumeral #1@}
\def\COMMENT#1{}
\let\COMMENT=\footnote
\title{A bijection between edges of the Tur\'an graph and irreducible elements in the dominance order lattice}
\keywords{}
\begin{document}
\author[N. Hassler]{Nathana\"el Hassler}
\address{Universit\'e Bourgogne Europe, LIB UR 7534, F-21000 Dijon, France}
\email{nathanael.hassler@ens-rennes.fr}\theoremstyle{definition}
\maketitle

 \begin{abstract}
 \noindent
 In this paper we build a bijection between the meet-irreducible elements of the lattice of the compositions of $n$ with parts in $[1,p]$ equipped with the dominance order, and the edges of the $(n,p)$-Tur\'an graph. Using this bijection, we then compute asymptotically the average value of some statistics on those meet-irreducible compositions.
 \\[2mm]
 {\bf Keywords:} Tur\'an graph, bijection, dominance order, composition, meet-irreducible element.\\[2mm]
 {\bf 2020 Mathematics Subject Classification:} 05A19, 05C30, 06A07.
 \end{abstract}

\baselineskip=0.20in

\section{Introduction}

Let $n\geq 0$ and $p\geq1$ be two integers. If $a$ and $b$ are two integers, we write $a\equiv b\bmod{p}$ if $a\bmod{p}=b\bmod{p}$. The \textit{$(n,p)$-Tur\'an graph} $\mathcal{T}_n^p$ is the complete $p$-partite graph on $n$ vertices, i.e. the graph with vertex set $V(\mathcal{T}_n^p)=\{1,\ldots,n\}$, and $\{a,b\}\in E(\mathcal{T}_n^p)$ if and only if $a\not\equiv b\bmod{p}$, see Figure~\ref{fig: graph T_8^3} for an example with $(n,p)=(8,3)$. This graph is known for being the only one having the maximum number of edges on $n$ vertices and being $K_{p+1}$-free, i.e. not having a complete induced subgraph on $p+1$ vertices, see e.g. \cite{Aig}. Due to the structure of $\mathcal{T}_n^p$, one can see that its number of edges is given by (see for instance \cite[Theorem 6]{motzkin-straus})
\begin{equation}\label{closed form a_p}
    a_p(n)=\left(1-\frac{1}{p}\right)\frac{n^2}{2}-\frac{(n\bmod{p})(p-(n\bmod p))}{2p}.
\end{equation}
We refer the reader to \cite{turan,bollobas} for some standard references about the Tur\'an graph.
\begin{figure}[h]
\centering
\begin{tikzpicture}[scale=4]
\draw (-0.7,0.666)--(0,0);\draw (-0.7,0.666)--(-0.282,0);\draw (-0.7,0.666)--(0.282,0);\draw (-0.7,0.666)--(0.4,0.966);\draw (-0.7,0.666)--(0.6,0.766);
\draw (-0.5,0.866)--(0,0);\draw (-0.5,0.866)--(-0.282,0);\draw (-0.5,0.866)--(0.282,0);\draw (-0.5,0.866)--(0.4,0.966);\draw (-0.5,0.866)--(0.6,0.766);
\draw (-0.3,1.066)--(0,0);\draw (-0.3,1.066)--(-0.282,0);\draw (-0.3,1.066)--(0.282,0);\draw (-0.3,1.066)--(0.4,0.966);\draw (-0.3,1.066)--(0.6,0.766);
\draw (-0.282,0)--(0.4,0.966);\draw (-0.282,0)--(0.6,0.766);
\draw (0,0)--(0.4,0.966);\draw (0,0)--(0.6,0.766);
\draw (0.282,0)--(0.4,0.966);\draw (0.282,0)--(0.6,0.766);
\node at (0,-0.141) {$5$};
\node at (-0.6,0.966) {$4$};
\node at (-0.8,0.766) {$1$};
\node at (-0.4,1.166) {$7$};
\node at (-0.282,-0.141) {$2$};
\node at (0.282,-0.141) {$8$};
\node at (0.5,1.066) {$3$};
\node at (0.7,0.866) {$6$};
\fill[black] (0,0) circle (1pt);
\fill[black] (-0.5,0.866) circle (1pt);
\fill[black] (-0.7,0.666) circle (1pt);
\fill[black] (-0.3,1.066) circle (1pt);
\fill[black] (-0.282,0) circle (1pt);
\fill[black] (0.282,0) circle (1pt);
\fill[black] (0.4,0.966) circle (1pt);
\fill[black] (0.6,0.766) circle (1pt);
\end{tikzpicture}
\caption{The $(8,3)$-Tur\'an graph $\mathcal{T}_8^3$.}
\label{fig: graph T_8^3}
\end{figure}
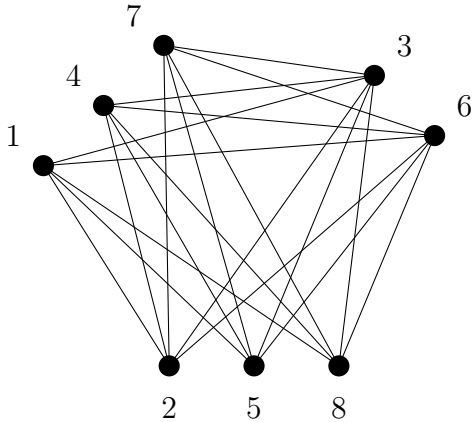

A \textit{composition} of $n\geq1$ is a tuple $(x_1,\ldots,x_m)$ of positive integers such that $m\geq1$ and $x_1+\ldots+x_m=n$. We denote by $\mathbb{F}_n^p$ the set of compositions of $n$ such that for all $i$, $1\leq x_i\leq p$. $\mathbb{F}_n^p$ is enumerated by the $p$-\textit{generalized Fibonacci sequence} $(F_n^p)_{n\geq0}$, defined for every $p\geq2$ by
$$F_n^p=F_{n-1}^p+F_{n-2}^p+\ldots+F_{n-p}^p$$
with initial conditions $F_n^p=0$ if $n<0$ and $F_0^p=1$ (see \cite{heu-mans}). The \textit{dominance order} on the set of compositions of $n$ is defined for two compositions $x=(x_1,\ldots,x_m)$ and $y=(y_1,\dots,y_\ell)$ by
$$x\leq y\Longleftrightarrow \mbox{ for all } 1\leq k\leq \min(m,\ell), \quad \sum_{i=1}^k x_i\leq \sum_{i=1}^k y_i,$$
see \cite{narayana fulton}. This order has mainly been studied on integer partitions, particularly for its connection to the representations of the symmetric group, see for instance~\cite{brylawski, simion, smyth}. Sapounakis, Tasoulas and Tsikouras also investigated the dominance order on Dyck paths \cite{STT}. It is proved in \cite{intfibo} that the poset $\mathbb{F}_n^p$ equipped with the dominance order forms a \textit{distributive lattice}, i.e. each pair of compositions $x,y\in\mathbb{F}_n^p$ admits a \textit{meet} (greatest lower bound) $x\wedge y$, and a \textit{join} (lowest upper bound) $x\vee y$, and the operations meet and join are distributive relatively to the other. If $x,y\in\mathbb{F}_n^p$, we say that $y$ \textit{covers} $x$ if $x<y$ and for all $z\in\mathbb{F}_n^p$, $x\leq z\leq y\Rightarrow z\in\{x,y\}$. We say equivalently that $y$ is an \textit{upper cover} of $x$, or $x$ is a \textit{lower cover} of $y$. An element $x\in\mathbb{F}_n^p$ is \textit{meet-irreducible} if for all $y,z\in\mathbb{F}_n^p$, $x=y\wedge z\Rightarrow y=x \mbox{ or } z=x$. Dually, an element $x\in\mathbb{F}_n^p$ is \textit{join-irreducible} if for all $y,z\in\mathbb{F}_n^p$, $x=y\vee z\Rightarrow y=x \mbox{ or } z=x$. Since $\mathbb{F}_n^p$ is a finite lattice, $x$ is meet-irreducible (resp. join-irreducible) if and only if $x$ has exactly one upper cover (resp. lower cover). See for instance \cite{enum comb} for the standard definitions of lattice theory. Let $\textsc{mi}_n^p$ (resp. $\textsc{ji}_n^p$) be the set of meet-irreducible (resp. join-irreducible) elements in $\mathbb{F}_n^p$. See Figure~\ref{fig:lattice F_5^3} for an example of $\mathbb{F}_n^p$ and $\mi_n^p$ with $(n,p)=(5,3)$.

\begin{figure}[h]
\centering
    \begin{tikzpicture}[scale=1]
    \node (134) at (0,8) {$(3,2)$};
    \node (124) at (-0.866,7) {$\mathbf{(2,3)}$};
    \node (34) at (0.866,7) {$\mathbf{(3,1,1)}$};
    \node (24) at (0,6) {$(2,2,1)$};
    \node (23) at (-0.866,5) {$\mathbf{(1,3,1)}$};
    \node (14) at (0.866,5) {$\mathbf{(2,1,2)}$};
    \node (13) at (-0.866,4) {$(1,2,2)$};
    \node (4) at (0.866,4) {$\mathbf{(2,1,1,1)}$};
    \node (12) at (-0.866,3) {$\mathbf{(1,1,3)}$};
    \node (3) at (0.866,3) {$(1,2,1,1)$};
    \node (2) at (0,2) {$(1,1,2,1)$};
    \node (1) at (0,1) {$\mathbf{(1,1,1,2)}$};
    \node (0) at (0,0) {$\mathbf{(1,1,1,1,1)}$};
    \draw (134) -- (124);\draw (134) -- (34);\draw (34) -- (24);\draw (124) -- (24);\draw (24) -- (14);\draw (24) -- (23);\draw (23) -- (13);\draw (14) -- (4);\draw (14) -- (13);\draw (13) -- (12);\draw (4) -- (3);\draw (13) -- (3);\draw (3) -- (2);\draw (12) -- (2);\draw (2) -- (1);\draw (1) -- (0);
\end{tikzpicture}
\hspace{2cm}
\begin{tikzpicture}
    \node (mi) at (0,6.5) {$\mathbf{\mi_5^3}$};
    \node (124) at (0,5.5) {$(2,3)$};
    \node (34) at (0,5) {$(3,1,1)$};
    \node (23) at (0,4.5) {$(1,3,1)$};
    \node (14) at (0,4) {$(2,1,2)$};
    \node (4) at (0,3.5) {$(2,1,1,1)$};
    \node (12) at (0,3) {$(1,1,3)$};
    \node (1) at (0,2.5) {$(1,1,1,2)$};
    \node (0) at (0,2) {$(1,1,1,1,1)$};
    \node (000000) at (0,0) {$ $};
    \draw[line width=0.5mm] (-1.2,1.6)--(-1.2,7)--(1.2,7)--(1.2,1.6)--cycle;
    \draw[line width=0.5mm] (-1.2,6)--(1.2,6);
\end{tikzpicture}
    \caption{The lattice $\mathbb{F}_5^3$ and the set $\mi_5^3$ of its meet-irreducible elements.}
    \label{fig:lattice F_5^3}
\end{figure}
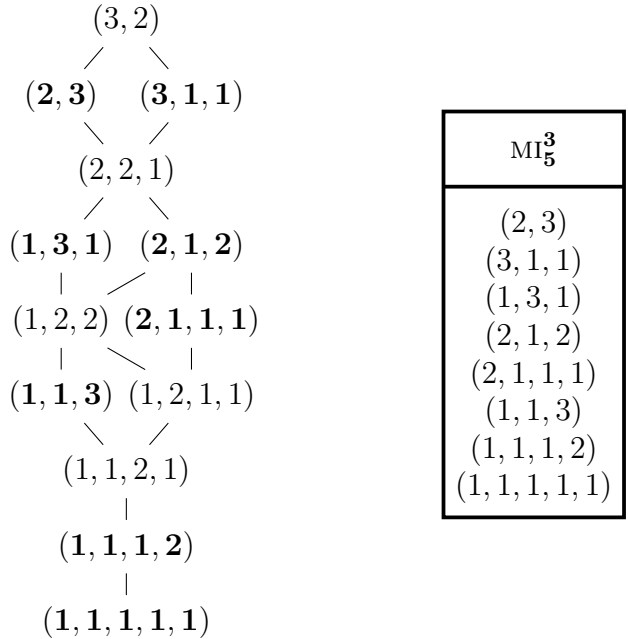

Other posets on integer compositions are studied in the literature, for instance the subword order \cite{BBD, sagan vatter, engen vatter, bjorner sagan}. However much less can be found on posets on restricted sets of compositions. To this extent, the study of $\mathbb{F}_n^p$ is natural, since its elements are enumerated by the famous $p$-generalized Fibonacci sequence, and it turns out that it presents fascinating enumerative properties. Indeed, in \cite{intfibo} the authors gave the enumeration of many characteristic elements in $\mathbb{F}_n^p$, and they proved in particular that $\mi_n^p$ is surprisingly enumerated by $a_p(n)$. As a consequence of Birkhoff's representation theorem, since $\mathbb{F}_n^p$ is a finite distributive lattice, $|\textsc{ji}_n^p|=|\mi_n^p|=a_p(n)$. The unexpected appearance of those numbers, well-known in extremal graph theory, in a context of order theory, motivates the study of the links between the corresponding objects in those two fields. In Section \ref{sec:bij}, we build an explicit bijection between $E(\mathcal{T}_n^p)$ and $\mi_n^p$, and then we show how this bijection can be adapted to $\textsc{ji}_n^p$. In Section \ref{sec:statistics} we use this bijection to compute asymptotically the average value of three statistics on $\mi_n^p$.

\section{The bijection}\label{sec:bij}

We start with the following lemma that counts the number of upper covers of a composition, and then characterizes the elements of $\mi_n^p$. For $a,b\in [1,p]$, we say that a composition $x=(x_1,\ldots,x_m)$ has an \textit{occurrence} $ab$ if there exists $1\leq i<m$ such that $x_i=a$ and $x_{i+1}=b$.

\begin{lemma}\label{nb upper covers}
    The number of upper covers of a composition $x\in\mathbb{F}_n^p$ is the number of occurrences $ab$ in $x$ with $1\leq a\leq p-1$ and $2\leq b\leq p$, plus one if $x$ ends with $i1$ for $1\leq i\leq p-1$. 
\end{lemma}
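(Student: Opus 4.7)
The plan is to prove both directions: that each of the listed operations on $x$ produces an upper cover, and conversely that every upper cover of $x$ arises this way. Throughout I write $S_k = \sum_{j \leq k} x_j$ and $T_k = \sum_{j \leq k} y_j$ for the partial sums, and set $m = |x|$, $\ell = |y|$.

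For the forward direction, given a consecutive pattern $(a,b)$ at positions $i, i+1$ with $a \leq p-1$ and $b \geq 2$, I let $y$ be $x$ with this pair replaced by $(a+1, b-1)$; then $y \in \mathbb{F}_n^p$ and its partial sums match those of $x$ except $S_i(y) = S_i(x) + 1$. For the suffix case, if $x$ ends in $(x_{m-1}, 1)$ with $x_{m-1} \leq p-1$, I replace this pair by $(x_{m-1}+1)$, obtaining $y$ of length $m-1$ whose partial sums agree with $x$ on $[1, m-2]$ and satisfy $S_{m-1}(y) = S_{m-1}(x) + 1$. To verify these are genuine covers, I take an arbitrary $z$ with $x \leq z \leq y$ and pin down $|z|$ via a length argument ($|z| > m$ forces $S_m(z) > n$, and $|z| < \ell$ forces $S_{|z|}(z) = n > T_{|z|}$), then use the sandwiched partial sums to conclude $z \in \{x, y\}$.

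For the converse, let $y$ cover $x$ and let $i$ be the smallest position with $S_i < T_i$. Then $y_j = x_j$ for $j < i$, $y_i \geq x_i + 1$, and $x_i \leq p-1$ since $y_i \leq p$. If $x_{i+1} \geq 2$, the pattern change $(x_i, x_{i+1}) \to (x_i+1, x_{i+1}-1)$ produces $z \in \mathbb{F}_n^p$ with $x < z \leq y$, forcing $z = y$. If $x_{i+1} = 1$ and $i+1 = m$, a weight-counting argument rules out $\ell = m$ (the identity $\sum_{j > i} y_j = n - S_{i-1} - y_i < n - S_{i-1} - x_i = \sum_{j > i} x_j$ contradicts $\sum_{j > i} y_j \geq \ell - i$ when $\ell = m$), which together with $\ell \geq i$ pins down $\ell = m-1$, and then $y$ is precisely the suffix change of $x$.

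The main obstacle is the remaining case $x_{i+1} = 1$ with $i + 1 < m$, where the goal is to derive a contradiction. I let $r$ be the smallest index in $[i+2, m]$ with $x_r \geq 2$. If such $r$ exists, then $x_{i+1} = \cdots = x_{r-1} = 1$ and the pattern change at positions $r-1, r$ produces an intermediate $w$ differing from $x$ only at $S_{r-1}(w) = S_{r-1}+1$. The key inequality $T_{r-1} \geq S_{r-1} + 1$ needed for $w \leq y$ (when $r-1 \leq \ell$; it is vacuous otherwise) telescopes from $T_i \geq S_i + 1$ using $y_j \geq 1$ for $i < j \leq r-1$ and $x_j = 1$ for $i < j < r$; moreover $w_i = x_i \neq y_i$ shows $w \neq y$. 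If no such $r$ exists, $x$ ends in a run of $1$'s from position $i+1$, and the suffix change of $x$ yields an intermediate $w$ of length $m-1$; the same weight-counting argument rules out $\ell = m$, giving $w \leq y$, and $w \neq y$ follows because either $|w| \neq \ell$ or $w_i = x_i \neq y_i$. In both branches $w$ witnesses that $y$ is not a cover, a contradiction, so this case cannot occur. The delicate bookkeeping in this residual case, particularly the telescoping partial-sum inequality and the verification that $w$ genuinely differs from $y$ across the possible values of $\ell$, is where I expect most of the care to be needed.
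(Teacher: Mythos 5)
Your proof is correct and follows essentially the same route as the paper: both arguments take the smallest index $i$ where the partial sums of $x$ and a larger element $y$ differ, and split into the same three cases ($x_{i+1}\geq 2$; all parts after $i$ equal to $1$; a later part $\geq 2$), producing in each case an intermediate element of pattern or suffix type. Your write-up is somewhat more careful than the paper's, in that you explicitly verify the cover property of the two operations and justify $\ell\leq m-1$ via the weight-counting argument, steps the paper leaves implicit.
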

\begin{proof}
    Suppose that $x\in\mathbb{F}_n^p$ satisfies $x=(x_1,\ldots,x_j,a,b,x_{j+1},\ldots,x_m)$ with $1\leq a\leq p-1$ and $2\leq b\leq p$. Then $z:=(x_1,\ldots,x_j,a+1,b-1,x_{j+1},\ldots,x_m)$ is an upper cover of $x$, and we say that $z$ has the form ($\star$). Similarly, if $x=(x_1,\ldots,x_m,i,1)$ with $1\leq i\leq p-1$, then $z:=(x_1\ldots,x_m,i+1)$ is an upper cover of $x$, and we say that $z$ has the form ($\star\star$). Conversely, suppose $x=(x_1,\ldots,x_m)\in\mathbb{F}_n^p$, and let $y=(y_1,\ldots,y_\ell)\in\mathbb{F}_n^p$ be such that $x<y$. To end the proof it suffices to prove that there exists $z\in\mathbb{F}_n^p$ with the form ($\star$) or ($\star\star$) such that $x<z\leq y$. Let $j$ be the smallest integer such that $x_j<y_j$. Then we have $x_j<p$.
    \vskip 5pt\noindent {Case 1:} Suppose that $x_{j+1}>1$. Then we set $z_j=x_j+1$ and $z_{j+1}=x_{j+1}-1$, and $z_i=x_i$ for $i\not\in\{j,j+1\}$. Then $z$ has the form ($\star$), and $x<z\leq y$.
    \vskip 5pt\noindent {Case 2:} Suppose that $x_{j+1}=x_{j+2}=\ldots=x_m=1$. Then we set $z_i=x_i$ if $i\leq m-2$, and $z_{m-1}=x_{m-1}+1$. Note that we may have $m=j+1$. Then $z$ has the form ($\star\star$), and $x<z\leq y$.
    \vskip 5pt\noindent {Case 3:} Suppose that $x_{j+1}=1$ and there exists $j+1<k\leq m$ such that $x_k>1$. We consider the smallest such $k$, and we set $z_{k-1}=x_{k-1}+1$ and $z_k=x_k-1$, and $z_i=x_i$ for $i\not\in\{k-1,k\}$. Then $z$ has the form ($\star$), and $x<z\leq y$.
    
    Considering those three cases, the converse holds and the lemma too.
\end{proof}

\begin{remark}\label{nb lower covers}
    As a direct porism of Lemma \ref{nb upper covers}, the number of lower covers of $x\in\mathbb{F}_n^p$ is the number of occurrences $ab$ in $x$ with $2\leq a\leq p$ and $1\leq b\leq p-1$, plus one if $x$ does not end by 1. 
\end{remark}

Now we give a recursive decomposition of $E(\mathcal{T}_n^p)$, and then we will provide a similar decomposition for $\mi_n^p$. We have 
\begin{equation}\label{dec turan}
E(\mathcal{T}_n^p)=E(\mathcal{T}_{n-1}^p)\cup\left\{\{a,n\}\subseteq [1,n] \ | \ a\ne n\bmod{p}\right\},
\end{equation}
and $$|\left\{1\leq a\leq n \ | \ a\ne n\bmod{p}\right\}|=\left\lfloor\left(1-\frac{1}{p}\right) n\right\rfloor.$$
In particular, the sequence $a_p(n)$ satisfies the recurrence relation
$$a_p(n)=a_p(n-1)+\left\lfloor\left(1-\frac{1}{p}\right) n\right\rfloor.$$

In order to construct our bijection naturally, we now give a similar decomposition of $\mi_n^p$. Let $f:\mi_{n-1}^p\longrightarrow\mi_n^p$ defined for a composition $x=(x_1,\ldots,x_m)$ by
\begin{align*}
    f(x)=\left\{\begin{array}{cc}
    (x_1,\ldots,x_m+1) & \mbox{ if } 1\leq x_m<p,\\
     (x_1,\ldots,x_m,1) & \mbox{ if } x_m=p.
\end{array}\right.
\end{align*}
Then it is not hard to check that $f$ takes indeed its values in $\mi_n^p$, and that $f$ is injective. Moreover, if $f(x)$ ends by 1, then it ends by $p1$. Conversely, if $y=(y_1,\ldots,y_\ell)\in\mi_n^p$ does not end by $i1$ with $1\leq i\leq p-1$, then $y=f(y_1,\ldots,y_\ell-1)$ if $y_\ell>1$, and $y=f(y_1,\ldots,y_{\ell-1})$ if $(y_{\ell-1},y_\ell)=(p,1)$. Consequently, we have
\begin{equation}\label{dec mi}
\begin{array}{cl}
    \mi_n^p & =f(\mi_{n-1}^p)\cup \{x\in\mi_n^p \ | \ x \mbox{ ends by } i1 \mbox{ with } 1\leq i\leq p-1\}  \\
     &=f(\mi_{n-1}^p)\cup \{(\underbrace{p,\ldots,p}_k,i,\underbrace{1,\ldots,1}_m)\in\mathbb{F}_n^p \ | \ 1\leq i\leq p-1,\ m\geq1,\ k\geq0\}. 
\end{array}
\end{equation}
The last equality holds since by Lemma \ref{nb upper covers}, if a composition in $\mi_n^p$ ends with $i1$ for some $1\leq i\leq p-1$, this suffix produces one upper cover; thus such a composition avoids consecutive patterns $ab$ with $1\leq a\leq p-1$ and $2\leq b\leq p$. By the injectivity of $f$, we have 
$$|\mi_n^p|=|\mi_{n-1}^p|+|A_n^p|,$$
with $A_n^p=\{(\underbrace{p,\ldots,p}_k,i,\underbrace{1,\ldots,1}_m)\in\mathbb{F}_n^p \ | \ 1\leq i\leq p-1,\ m\geq1,\ k\geq0\}$. 
\begin{lemma}\label{card A_n^p}
    Given $n,p\geq2$, we have $|A_n^p|=\left\lfloor\left(1-\frac{1}{p}\right) n\right\rfloor$.
\end{lemma}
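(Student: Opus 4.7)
The plan is to reduce the enumeration of $A_n^p$ to a straightforward counting problem about integers in an arithmetic-residue class, and then dispatch it with a standard floor/ceiling identity.

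First, I would observe that an element of $A_n^p$ is completely determined by the pair $(k,i)$: once $k\geq 0$ and $1\leq i\leq p-1$ are chosen, the value $m$ is forced to equal $n-kp-i$, and the only constraint left is $m\geq 1$, i.e.\ $kp+i\leq n-1$. Hence
$$|A_n^p|=|\{(k,i)\in\Z_{\geq 0}\times[1,p-1] \ : \ kp+i\leq n-1\}|.$$
Next I would note that the map $(k,i)\mapsto kp+i$ is a bijection from $\Z_{\geq 0}\times[1,p-1]$ onto the set of positive integers not divisible by $p$, because every positive integer $j$ has a unique Euclidean decomposition $j=kp+i$ with $k\geq 0$ and $0\leq i\leq p-1$, and the condition $p\nmid j$ is exactly $i\neq 0$. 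Under this bijection, the constraint $kp+i\leq n-1$ becomes $j\leq n-1$, so
$$|A_n^p|=|\{1\leq j\leq n-1 \ : \ p\nmid j\}|=(n-1)-\left\lfloor \frac{n-1}{p}\right\rfloor.$$

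Finally I would compare this to the target value $\left\lfloor\left(1-\tfrac{1}{p}\right)n\right\rfloor=n-\lceil n/p\rceil$. The equality $(n-1)-\lfloor (n-1)/p\rfloor = n-\lceil n/p\rceil$ is equivalent to the classical identity
$$\left\lceil \frac{n}{p}\right\rceil = \left\lfloor \frac{n-1}{p}\right\rfloor+1 \qquad (n\geq 1),$$
which is immediate by writing $n=qp+r$ with $0\leq r\leq p-1$ and checking the cases $r=0$ and $r\geq 1$ separately.

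The main obstacle here is really only bookkeeping: making sure that the $m\geq 1$ condition (forcing the suffix of $1$'s to be non-empty) is correctly translated into the strict inequality $j\leq n-1$ rather than $j\leq n$, since a missing $-1$ would produce a different floor when $p\mid n$. Apart from that, the argument is a clean two-step bijection followed by the floor/ceiling identity.
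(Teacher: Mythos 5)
Your proof is correct and takes essentially the same approach as the paper: both identify an element of $A_n^p$ with a single integer and reduce to counting integers in an interval that avoid one residue class mod $p$. The only difference is cosmetic --- you parametrize by $j=kp+i=n-m$ ranging over $\{1\le j\le n-1 : p\nmid j\}$, whereas the paper uses the complementary statistic $m$ ranging over $\{1\le a\le n : a\not\equiv n \bmod p\}$ (a choice it reuses later to define $\Psi_n^p$), and both counts equal $\left\lfloor\left(1-\frac{1}{p}\right)n\right\rfloor$.
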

\begin{proof}
    For $x\in A_n^p$, let $k_x\geq0$, $1\leq i_x\leq p-1$ and $m_x\geq1$ such that $x=(\underbrace{p,\ldots,p}_{k_x},i_x,\underbrace{1,\ldots,1}_{m_x})$. Since $x$ is a composition of $n$ we have $n=k_xp+i_x+m_x$, so $m_x\equiv(n-i_x)\bmod{p}$, and $m_x\not\equiv n\bmod{p}$ since $i_x\not\equiv0\bmod{p}$. Conversely, if $1\leq a\leq n$ with $a\not\equiv n\bmod{p}$, then if $y\in A_n^p$ is such that $y=(\underbrace{p,\ldots,p}_{\left\lfloor\frac{n-a}{p}\right\rfloor},(n-a)\bmod{p},\underbrace{1,\ldots,1}_{a})$, we have $m_y=a$. This proves that $x\mapsto m_x$ is a bijection between $A_n^p$ and $\{1\leq a\leq n \ | \ a\not\equiv n\bmod{p}\}$. Therefore,
    $$|A_n^p|=|\{1\leq a\leq n \ | \ a\not\equiv n\bmod{p}\}|=\left\lfloor\left(1-\frac{1}{p}\right) n\right\rfloor.$$
\end{proof}

Now for $n,p\geq2$ we give a recursive bijection $\Psi_n^p:E(\mathcal{T}_n^p)\longrightarrow \mi_n^p$ based on the decompositions from Eq. (\ref{dec turan}) and Eq. (\ref{dec mi}), and the bijection from Lemma \ref{card A_n^p}. For $\{a,b\}\in E(\mathcal{T}_n^p)$ with $a<b$ we set
$$\Psi_n^p(a,b)=\left\{\begin{array}{cc}
    f(\Psi_{n-1}^p(a,b)) & \mbox{ if } b<n,\\
    (\underbrace{p,\ldots,p}_{\left\lfloor\frac{n-a}{p}\right\rfloor},(n-a)\bmod{p},\underbrace{1,\ldots,1}_{a}) & \mbox{ if } b=n.
\end{array}\right.$$
It follows from a direct induction that 
\begin{equation}\label{closed form Psi}
    \Psi_n^p(a,b)=(\underbrace{p,\ldots,p}_{\left\lfloor\frac{b-a}{p}\right\rfloor},(b-a)\bmod{p},\underbrace{1,\ldots,1}_{a-1},\underbrace{p,\ldots,p}_{\left\lfloor\frac{n-b+1}{p}\right\rfloor},(n-b+1)\bmod{p}).
\end{equation}
For $x\in\mi_n^p$, let $k_x\geq 0$, $1\leq i_x\leq p-1$ and $m_x\geq0$ maximum such that $x$ starts with $\underbrace{p,\ldots,p}_{k_x},i_x,\underbrace{1\ldots,1}_{m_x}$. Then we define $\Phi_n^p:\mi_n^p\longrightarrow E(\mathcal{T}_n^p)$ by $$\Phi_n^p(x)=\{m_x+\delta_x,p k_x+i_x+m_x+\delta_x\},$$
where 
$$\delta_x=\left\{\begin{array}{cl}
    1 & \mbox{ if } (x \mbox{ ends by }1\Rightarrow x \mbox{ ends by }p1), \\
    0 & \mbox{ otherwise.} 
\end{array}\right.$$
Observe that $\Phi_n^p$ is well defined since $k_x,i_x$ and $m_x$ always exist for the elements of $\mi_n^p$. Moreover, $\Phi_n^p$ takes its values in $E(\mathcal{T}_n^p)$ because $i_x\not\equiv0\bmod{p}$. With a direct verification, we deduce the following theorem. See Table \ref{tab:bijection} for two examples of the bijection $\Psi_n^p$.
\begin{theorem}
    For $n,p\geq2$, $\Psi_n^p$ and $\Phi_n^p$ are reciprocal bijections.
\end{theorem}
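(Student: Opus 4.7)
The plan is to verify that $\Phi_n^p \circ \Psi_n^p$ and $\Psi_n^p \circ \Phi_n^p$ are both identity maps, working throughout from the closed form of $\Psi_n^p$ in Eq.~(\ref{closed form Psi}). Since that formula is already in hand, both directions amount to reading the quadruple $(k_x, i_x, m_x, \delta_x)$ off an explicit composition and matching it against the endpoints of the targeted edge.

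For $\Phi_n^p \circ \Psi_n^p = \mathrm{id}$, I would fix $\{a,b\} \in E(\mathcal{T}_n^p)$ with $a<b$, observe that $(b-a)\bmod p \in [1,p-1]$, and read off $k_{\Psi_n^p(a,b)} = \lfloor (b-a)/p \rfloor$ and $i_{\Psi_n^p(a,b)} = (b-a)\bmod p$ directly from the formula. Two cases handle $(m_x, \delta_x)$: if $b<n$, then the $(a-1)$-block of ones is followed either by a $p$ or by $(n-b+1)\bmod p \in [2,p-1]$, giving $m_x = a-1$, and a small subcase check on the terminal entry yields $\delta_x = 1$; if $b = n$, the composition $(p^{\lfloor(n-a)/p\rfloor}, (n-a)\bmod p, 1^a)$ has $m_x = a$ and ends in $1^a$ preceded by some $i_x<p$, giving $\delta_x = 0$. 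In either case, substitution into $\Phi_n^p$ returns $\{a,b\}$.

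For $\Psi_n^p \circ \Phi_n^p = \mathrm{id}$, I would decompose $x \in \mi_n^p$ as $(p^{k_x}, i_x, 1^{m_x}, r_1,\ldots,r_s)$ with the leading block maximal and set $a = m_x + \delta_x$, $b = pk_x + i_x + m_x + \delta_x$. When $\delta_x = 0$, the decomposition in Eq.~(\ref{dec mi}) places $x$ in $A_n^p$, so $s=0$ and $b = n$, and $\Psi_n^p(m_x,n)$ directly reproduces $x$. When $\delta_x = 1$, the formula for $\Psi_n^p(a,b)$ has leading block $(p^{k_x}, i_x, 1^{m_x})$, and the remaining task is to show $(r_1,\ldots,r_s) = (p^j, q)$ with $j = \lfloor(n-b+1)/p\rfloor$ and $q = (n-b+1)\bmod p$ (the $q$ being omitted when $q = 0$).

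The main obstacle is establishing this tail structure. The argument invokes Lemma~\ref{nb upper covers}: because $x$ has exactly one upper cover and $\delta_x = 1$ means $x$ does not end in $i1$ for $i \in [1,p-1]$, $x$ must contain exactly one consecutive pattern $\alpha\beta$ with $\alpha \in [1,p-1]$ and $\beta \in [2,p]$. No such pattern appears inside the prefix $(p^{k_x}, i_x, 1^{m_x})$, so $s \ge 1$ (also using that the top of $\mathbb{F}_n^p$, the only element with $s = 0$ and $\delta_x = 1$, is excluded from $\mi_n^p$), and the transition from the prefix into $r_1 \in [2,p]$ supplies the unique allowed pattern. Hence the tail contains no further such pattern; splitting on whether $r_1 = p$ or $r_1 \in [2,p-1]$ and using that $x$ can end in 1 only if it ends in $p1$ then forces the tail into the claimed shape $(p^j, q)$. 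Matching sums $pj + q = n-b+1$ completes $\Psi_n^p(\Phi_n^p(x)) = x$.
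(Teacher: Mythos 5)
Your proposal is correct and matches the paper's approach: the paper simply asserts the result ``with a direct verification,'' and your argument is precisely that verification carried out in full, using only ingredients the paper has already set up (the closed form~\eqref{closed form Psi}, the characterization of upper covers in Lemma~\ref{nb upper covers}, and the decomposition~\eqref{dec mi}). In particular your key step --- that $\delta_x=1$ forces exactly one consecutive pattern $\alpha\beta$ with $\alpha\in[1,p-1]$, $\beta\in[2,p]$, which pins the tail of $x$ to the shape $(p^j,q)$ --- is exactly the structural fact the paper leaves implicit, so there is nothing to add.
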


\begin{table}[h]
    \centering
    \begin{tabular}{|c|c|}
    \hline
        $\{a,b\}\in E(\mathcal{T}_7^2)$ & $\Psi_7^2(a,b)\in\mi_7^2$ \\
        \hline
        $\{1,2\}$ & $(1, 2, 2, 2)$\\
        $\{1,4\}$ & $(2, 1, 2, 2)$\\
        $\{1,6\}$ & $(2, 2, 1, 2)$\\
        $\{2,3\}$ & $(1, 1, 2, 2, 1)$\\
        $\{2,5\}$ & $(2, 1, 1, 2, 1)$\\
        $\{2,7\}$ & $(2, 2, 1, 1, 1)$\\
        $\{3,4\}$ & $(1, 1, 1, 2, 2)$\\
        $\{3,6\}$ & $(2, 1, 1, 1, 2)$\\
        $\{4,5\}$ & $(1, 1, 1, 1, 2, 1)$\\
        $\{4,7\}$ & $(2, 1, 1, 1, 1, 1)$\\
        $\{5,6\}$ & $(1, 1, 1, 1, 1, 2)$\\
        $\{6,7\}$ & $(1, 1, 1, 1, 1, 1, 1)$\\
        \hline
    \end{tabular}
    \hspace{1cm}
    \begin{tabular}{|c|c|}
    \hline
        $\{a,b\}\in E(\mathcal{T}_6^3)$ & $\Psi_6^3(a,b)\in\mi_6^3$ \\
        \hline
        $\{1,2\}$ & $(1, 3, 2)$\\
        $\{1,3\}$ & $(2, 3, 1)$\\
        $\{1,5\}$ & $(3, 1, 2)$\\
        $\{1,6\}$ & $(3, 2, 1)$\\
        $\{2,3\}$ & $(1, 1, 3, 1)$\\
        $\{2,4\}$ & $(2, 1, 3)$\\
        $\{2,6\}$ & $(3, 1, 1, 1)$\\
        $\{3,4\}$ & $(1, 1, 1, 3)$\\
        $\{3,5\}$ & $(2, 1, 1, 2)$\\
        $\{4,5\}$ & $(1, 1, 1, 1, 2)$\\
        $\{4,6\}$ & $(2, 1, 1, 1, 1)$\\
        $\{5,6\}$ & $(1, 1, 1, 1, 1, 1)$\\
        \hline
    \end{tabular}
    \vspace{0.3cm}
    \caption{Two examples of the bijection $\Psi_n^p$ for $(n,p)=(7,2)$ and $(6,3)$.}
    \label{tab:bijection}
\end{table}

\begin{remark}
    By doing the same investigation for $\textsc{ji}_n^p$, we obtain that the following map is a bijection from $E(\mathcal{T}_n^p)$ to $\textsc{ji}_n^p$:
    \begin{equation*}
    \Tilde{\Psi}_n^p(a,b)=(\underbrace{1,\ldots,1}_{a-1},(b-a)\bmod{p}+1,\underbrace{p,\ldots,p}_{\left\lfloor\frac{b-a}{p}\right\rfloor},\underbrace{1,\ldots,1}_{n-b}).
\end{equation*}
Using Remark \ref{nb lower covers}, we can check that it takes indeed its values in $\textsc{ji}_n^p$. To define its reciprocal, for $x\in\mathbb{F}_n^p$, let $k_x$ (resp. $m_x$) be maximal such that $x$ starts (resp. ends) with $k_x$ (resp. $m_x$) consecutive 1s. The reciprocal of $\Tilde{\Psi}_n^p$ is then defined by
\begin{equation*}
    \Tilde{\Phi}_n^p(x)=\{k_x+1,n-m_x\}.
\end{equation*}
For $x\in\textsc{ji}_n^p$, $n-k_x-m_x-1\not\equiv 0\bmod{p}$, hence $\Tilde{\Phi}_n^p$ takes its values in $E(\mathcal{T}_n^p)$. See Table \ref{tab:bijection tilde} for two examples of the bijection $\Tilde{\Phi}_n^p$.
\end{remark}

\begin{table}[h]
    \centering
    \begin{tabular}{|c|c|}
    \hline
        $\{a,b\}\in E(\mathcal{T}_7^2)$ & $\Tilde{\Psi}_7^2(a,b)\in\textsc{ji}_7^2$ \\
        \hline
        $\{1,2\}$ & $(2, 1, 1, 1, 1, 1)$\\
        $\{1,4\}$ & $(2, 2, 1, 1, 1)$\\
        $\{1,6\}$ & $(2, 2, 2, 1)$\\
        $\{2,3\}$ & $(1, 2, 1, 1, 1, 1)$\\
        $\{2,5\}$ & $(1, 2, 2, 1, 1)$\\
        $\{2,7\}$ & $(1, 2, 2, 2)$\\
        $\{3,4\}$ & $(1, 1, 2, 1, 1, 1)$\\
        $\{3,6\}$ & $(1, 1, 2, 2, 1)$\\
        $\{4,5\}$ & $(1, 1, 1, 2, 1, 1)$\\
        $\{4,7\}$ & $(1, 1, 1, 2, 2)$\\
        $\{5,6\}$ & $(1, 1, 1, 1, 2, 1)$\\
        $\{6,7\}$ & $(1, 1, 1, 1, 1, 2)$\\
        \hline
    \end{tabular}
    \hspace{1cm}
    \begin{tabular}{|c|c|}
    \hline
        $\{a,b\}\in E(\mathcal{T}_6^3)$ & $\Tilde{\Psi}_6^3(a,b)\in\textsc{ji}_6^3$ \\
        \hline
        $\{1,2\}$ & $(2, 1, 1, 1, 1)$\\
        $\{1,3\}$ & $(3, 1, 1, 1)$\\
        $\{1,5\}$ & $(2, 3, 1)$\\
        $\{1,6\}$ & $(3, 3)$\\
        $\{2,3\}$ & $(1, 2, 1, 1, 1)$\\
        $\{2,4\}$ & $(1, 3, 1, 1)$\\
        $\{2,6\}$ & $(1, 2, 3)$\\
        $\{3,4\}$ & $(1, 1, 2, 1, 1)$\\
        $\{3,5\}$ & $(1, 1, 3, 1)$\\
        $\{4,5\}$ & $(1, 1, 1, 2, 1)$\\
        $\{4,6\}$ & $(1, 1, 1, 3)$\\
        $\{5,6\}$ & $(1, 1, 1, 1, 2)$\\
        \hline
    \end{tabular}
    \vspace{0.3cm}
    \caption{Two examples of the bijection $\Tilde{\Psi}_n^p$ for $(n,p)=(7,2)$ and $(6,3)$.}
    \label{tab:bijection tilde}
\end{table}

\section{Some statistics on $\mi_n^p$}\label{sec:statistics}

In this section we use the bijection $\Psi_n^p$ to study some statistics on the set $\mi_n^p$. We illustrate the utility of this bijection by computing the average value of three statistics: $\texttt{parts}$, $\texttt{first}$ and $\texttt{wrec}$, that count respectively the number of parts, the first part and the number of weak records. The statistics $\texttt{parts}$ and $\texttt{first}$ are studied in \cite{malandro,chinn-heu} on $\mathbb{F}_n^p$, and the statistic $\texttt{wrec}$ is studied in \cite{knof-mans} on the set of all compositions. In particular, we compare our results on $\mi_n^p$ to the ones of \cite{malandro,chinn-heu,knof-mans} on larger sets of compositions. We start with a lemma computing some sums over $E(\mathcal{T}_n^p)$.

\begin{lemma}\label{aux sum}
    For a given $p\geq 2$ we have as $n\to\infty$
    $$\sum_{\substack{1\leq a<b\leq n\\a\not\equiv b\bmod{p}}} a=\left(1-\frac{1}{p}\right)\frac{n^3}{6}+O(n^2),$$
    $$\sum_{\substack{1\leq a<b\leq n\\a\not\equiv b\bmod{p}}} b=\left(1-\frac{1}{p}\right)\frac{n^3}{3}+O(n^2).$$
\end{lemma}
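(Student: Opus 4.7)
The plan is to fix one of the two summation variables and exploit the fact that the congruence constraint $a \not\equiv b \pmod{p}$ removes a $1/p$-fraction of the partners, up to bounded error.

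For the second sum, I would fix $b$ and count valid $a$. Among the integers $a \in \{1,\ldots,b-1\}$, exactly one residue class mod $p$ is forbidden, so the count is $(b-1) - \lfloor (b-1)/p\rfloor = \left(1-\tfrac{1}{p}\right)(b-1) + O(1)$, where the implicit constant depends only on $p$. Summing,
\[
\sum_{\substack{1\le a<b\le n\\ a\not\equiv b\bmod p}} b \;=\; \sum_{b=1}^{n} b\left[\left(1-\tfrac{1}{p}\right)(b-1)+O(1)\right] \;=\; \left(1-\tfrac{1}{p}\right)\sum_{b=1}^{n} b(b-1) + O(n^2),
\]
and using $\sum_{b=1}^{n} b(b-1) = \tfrac{n^3}{3}+O(n^2)$ gives the second identity.

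For the first sum, I would dually fix $a$ and count valid $b \in \{a+1,\ldots,n\}$; again one residue class mod $p$ is excluded, so the count equals $\left(1-\tfrac{1}{p}\right)(n-a)+O(1)$. Then
\[
\sum_{\substack{1\le a<b\le n\\ a\not\equiv b\bmod p}} a \;=\; \sum_{a=1}^{n-1} a\left[\left(1-\tfrac{1}{p}\right)(n-a)+O(1)\right] \;=\; \left(1-\tfrac{1}{p}\right)\sum_{a=1}^{n-1} a(n-a) + O(n^2).
\]
The standard evaluation $\sum_{a=1}^{n-1} a(n-a) = \tfrac{n^2(n-1)}{2}-\tfrac{(n-1)n(2n-1)}{6} = \tfrac{n^3}{6}+O(n^2)$ finishes the first identity.

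There is no real obstacle here; the only point requiring a little care is verifying that the ``one forbidden residue'' count is uniformly $\left(1-\tfrac{1}{p}\right)N + O(1)$ with a constant independent of the parameters, so that the error when multiplied by $a$ or $b$ and summed over a range of length $n$ contributes only $O(n^2)$. An alternative route would be inclusion–exclusion on the equivalent identity $\sum_{a\not\equiv b} = \sum_{\text{all}} - \sum_{a\equiv b}$, parametrising the equal-residue case by $b=a+kp$ with $k\ge 1$; both approaches yield the same leading constants $(1-1/p)/6$ and $(1-1/p)/3$.
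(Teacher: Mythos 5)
Your proof is correct, and it takes a different (and somewhat more direct) route than the paper. The paper proceeds by complementary counting: it first evaluates the unconstrained sums $s(n)=\sum_{1\le a<b\le n}a$ and $S(n)=\sum_{1\le a<b\le n}b$ in closed form, then subtracts, for each residue class $i$ modulo $p$, the sum over pairs with $a\equiv b\equiv i\bmod p$, which it evaluates by the substitution $a=pa'+i$, $b=pb'+i$ and a rescaled application of the same closed forms. That is exactly the ``inclusion--exclusion'' alternative you mention in your last paragraph. Your main argument instead fixes one variable and counts the admissible partners directly as $\left(1-\tfrac1p\right)N+O(1)$, which avoids the residue-class decomposition entirely and is a bit shorter; the one point needing care, which you correctly flag, is that the $O(1)$ in the partner count is uniform (it is bounded by $1$, since exactly one residue class is forbidden and the count of a residue class in an interval of length $N$ is $N/p+O(1)$), so that after multiplying by $a$ or $b$ and summing over $O(n)$ terms the error is $O(n^2)$. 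Both approaches give the same leading constants, and the paper's exact formulas $s(n)=\tfrac{n(n-1)(n+1)}{6}$ and $S(n)=\tfrac{n(n-1)(n+1)}{3}$ are reused later only through their leading terms, so nothing is lost by your method.
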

\begin{proof}
    First, let us compute the sum without the congruence constraint.
    \begin{align*}
        s(n)&:=\sum_{1\leq a<b\leq n} a =\sum_{a=1}^{n-1}\sum_{b=a+1}^{n}a=\sum_{a=1}^{n-1}(n-a)a\\
        &=\frac{n^2(n-1)}{2}-\frac{n(n-1)(2n-1)}{6}=\frac{n(n-1)(n+1)}{6}.
    \end{align*}
    Similarly,
    \begin{align*}
        S(n)&:=\sum_{1\leq a<b\leq n} b =\sum_{b=2}^{n}\sum_{a=1}^{b-1}b=\sum_{b=2}^{n}b(b-1)\\
        &=\frac{n(n+1)(2n+1)}{6}-\frac{n(n+1)}{2}=\frac{n(n-1)(n+1)}{3}.
    \end{align*}
    Now if $0\leq i\leq p-1$, let $\delta_i=1$ if $i>n\bmod{p}$, and $\delta_i=0$ otherwise. Then
    \begin{align*}
        \sum_{\substack{1\leq a<b\leq n\\a\equiv b\equiv i\bmod{p}}} a=\sum_{a=0}^{\left\lfloor n/p\right\rfloor-\delta_i-1}\sum_{b=a+1}^{\left\lfloor n/p\right\rfloor-\delta_i}(pa+i)=p\cdot s(\left\lfloor n/p\right\rfloor)+O(n^2)=\frac{n^3}{6p^2}+O(n^2).
    \end{align*}
    We deduce
    \begin{align*}
        \sum_{\substack{1\leq a<b\leq n\\a\not\equiv b\bmod{p}}} a=s(n)-\sum_{i=0}^{p-1}\sum_{\substack{1\leq a<b\leq n\\a\equiv b\equiv i\bmod{p}}} a=\frac{n^3}{6}+O(n^2)-p\left(\frac{n^3}{6p^2}+O(n^2)\right)=\left(1-\frac{1}{p}\right)\frac{n^3}{6}+O(n^2).
    \end{align*}
    Similarly, 
    \begin{align*}
        \sum_{\substack{1\leq a<b\leq n\\a\equiv b\equiv i\bmod{p}}} b=\sum_{b=1}^{\left\lfloor n/p\right\rfloor-\delta_i}\sum_{a=0}^{b-1}(pb+i)=p\cdot S(\left\lfloor n/p\right\rfloor)+O(n^2)=\frac{n^3}{3p^2}+O(n^2),
    \end{align*}
    and so
    \begin{align*}
        \sum_{\substack{1\leq a<b\leq n\\a\not\equiv b\bmod{p}}} b=S(n)-\sum_{i=0}^{p-1}\sum_{\substack{1\leq a<b\leq n\\a\equiv b\equiv i\bmod{p}}} b=\frac{n^3}{3}+O(n^2)-p\left(\frac{n^3}{3p^2}+O(n^2)\right)=\left(1-\frac{1}{p}\right)\frac{n^3}{3}+O(n^2).
    \end{align*}
\end{proof}

\subsection{The number of parts} For a composition $x=(x_1,\ldots,x_m)\in\mathbb{F}_n^p$ we denote by $\texttt{parts}(x)$ its number of parts $m$. 
\begin{proposition}
    The average number of parts of a composition in $\mi_n^p$ satisfies as $n\to\infty$
    \begin{align*}
        \frac{1}{|\mi_n^p|}\sum_{x\in\mi_n^p}\texttt{\emph{parts}}(x)\sim \left(1+\frac{2}{p}\right)\frac{n}{3}.
    \end{align*}
\end{proposition}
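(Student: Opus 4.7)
The plan is to push the sum through the bijection $\Psi_n^p$ of Section \ref{sec:bij} and use the explicit closed form (\ref{closed form Psi}) to count parts edge by edge. That is, I will write
$$\sum_{x\in\mi_n^p}\texttt{parts}(x)=\sum_{\{a,b\}\in E(\mathcal{T}_n^p)}\texttt{parts}(\Psi_n^p(a,b)),$$
and evaluate the right-hand side asymptotically using Lemma \ref{aux sum}.

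First I would read off the number of parts of $\Psi_n^p(a,b)$ block by block from (\ref{closed form Psi}): there are $\lfloor(b-a)/p\rfloor$ copies of $p$, then one part equal to $(b-a)\bmod p$ (which is always present since $a\not\equiv b\bmod p$ forces this residue to be nonzero), then $a-1$ copies of $1$, then $\lfloor(n-b+1)/p\rfloor$ copies of $p$, and finally a trailing part $(n-b+1)\bmod p$ which contributes $1$ to the count only when this residue is nonzero. Using $\lfloor x\rfloor=x+O(1)$, I would collect these to get
$$\texttt{parts}(\Psi_n^p(a,b))=\left(1-\frac{1}{p}\right)a+\frac{n}{p}+O(1),$$
with the $O(1)$ uniform in $a,b$. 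The key observation is that the leading term depends only on $a$, not on $b$.

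Summing this over $\{a,b\}\in E(\mathcal{T}_n^p)$, the constant term $n/p$ contributes $\frac{n}{p}\cdot a_p(n)\sim\frac{1}{p}(1-1/p)\frac{n^3}{2}$, while the $a$-term contributes $(1-1/p)$ times $\sum a$, which by Lemma \ref{aux sum} is $(1-1/p)^2\frac{n^3}{6}+O(n^2)$. The $O(1)$ error contributes $O(|E(\mathcal{T}_n^p)|)=O(n^2)$. Altogether,
$$\sum_{x\in\mi_n^p}\texttt{parts}(x)=\left[\frac{1-1/p}{2p}+\frac{(1-1/p)^2}{6}\right]n^3+O(n^2).$$
Dividing by $|\mi_n^p|=a_p(n)\sim(1-1/p)\frac{n^2}{2}$ and simplifying the rational expression gives $\frac{1-1/p}{3}+\frac{1}{p}=\frac{1}{3}+\frac{2}{3p}$, which is $(1+2/p)n/3$ as claimed.

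No step here is genuinely hard; the only thing to be careful about is the bookkeeping in step one, namely confirming that the middle residue part is always counted (thanks to the Tur\'an edge condition) while the trailing residue part may or may not be, and that both effects are $O(1)$ and therefore absorbed into the error. Once $\texttt{parts}(\Psi_n^p(a,b))$ is reduced to $(1-1/p)a+n/p+O(1)$, the rest of the proof is a direct application of Lemma \ref{aux sum}.
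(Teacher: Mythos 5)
Your proposal is correct and follows essentially the same route as the paper: push the sum through $\Psi_n^p$, read off $\texttt{parts}(\Psi_n^p(a,b))=\frac{n}{p}+\left(1-\frac{1}{p}\right)a+O(1)$ from Eq.~(\ref{closed form Psi}), and apply Lemma~\ref{aux sum} together with $a_p(n)\sim\left(1-\frac{1}{p}\right)\frac{n^2}{2}$. Your bookkeeping of the middle and trailing residue parts matches the paper's, and the final simplification is the same computation.
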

\begin{proof}
    From Eq. (\ref{closed form Psi}), for every $a\not\equiv b\bmod{p}$ we have 
    \begin{align*}
        \texttt{parts}(\Psi_n^p(a,b))&=\left\lfloor\frac{b-a}{p}\right\rfloor+1+a-1+\left\lfloor\frac{n-b+1}{p}\right\rfloor+\mathds{1}_{b\equiv n+1\bmod{p}}\\
        &=\frac{n}{p}+\left(1-\frac{1}{p}\right)a+O(1).
    \end{align*}
    The total number of parts in all compositions of $\mi_n^p$ is then
    \begin{align*}
        \sum_{x\in\mi_n^p}\texttt{parts}(x)&=\sum_{\substack{1\leq a<b\leq n\\a\not\equiv b\bmod{p}}}\texttt{parts}(\Psi_n^p(a,b))=\sum_{\substack{1\leq a<b\leq n\\a\not\equiv b\bmod{p}}}\left[\frac{n}{p}+\left(1-\frac{1}{p}\right)a\right]+O(n^2)\\
        &=a_p(n)\cdot\frac{n}{p}+\left(1-\frac{1}{p}\right)^2\frac{n^3}{6}+O(n^2),
    \end{align*}
    by Lemma \ref{aux sum}. From Eq. (\ref{closed form a_p}), $a_p(n)=\left(1-\frac{1}{p}\right)\frac{n^2}{2}+O(1)$, so we deduce
    \begin{align*}
        \frac{1}{|\mi_n^p|}\sum_{x\in\mi_n^p}\texttt{parts}(x)&=\frac{1}{a_p(n)}\left(a_p(n)\cdot\frac{n}{p}+\left(1-\frac{1}{p}\right)^2\frac{n^3}{6}+O(n^2)\right)\\
        &=\frac{n}{p}+\left(1-\frac{1}{p}\right)\frac{n}{3}+O(1)\\
        &=\left(1+\frac{2}{p}\right)\frac{n}{3}+O(1).
    \end{align*}
\end{proof}

\begin{remark}\label{rk parts}
    In \cite{malandro,chinn-heu} it is proved that the average value of $\texttt{parts}$ on $\mathbb{F}_n^p$ is asymptotically $\frac{n}{\phi_pQ_p'(\phi_p)}$, where $Q_p(x)=x^p+x^{p-1}+\ldots+x-1$, and $\phi_p$ is the smallest root of $Q_p(x)$.	One can check that for all $p\geq2$, we have $\frac{1}{3}\left(1+\frac{2}{p}\right)< \frac{1}{\phi_pQ_p'(\phi_p)}$. This means that asymptotically, compositions in $\mi_n^p$ have on average less parts than regular compositions from $\mathbb{F}_n^p$.
\end{remark}

\subsection{The first part} For a composition $x=(x_1,\ldots,x_m)\in\mathbb{F}_n^p$ we denote by $\texttt{first}(x)$ the value of its first part $x_1$. 

\begin{proposition}
The average value of the statistic \texttt{\emph{first}} on the compositions of $\mi_n^p$ satisfies as $n\to\infty$
\begin{align*}
    \frac{1}{|\mi_n^p|}\sum_{x\in\mi_n^p}\texttt{\emph{first}}(x)=p\left(1-\frac{p}{n}+\frac{p(p+1)}{3n^2}+O\left(\frac{1}{n^3}\right)\right).
\end{align*}    
\end{proposition}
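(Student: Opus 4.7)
The plan is to read off $\texttt{first}(\Psi_n^p(a,b))$ directly from the closed form in Eq.~(\ref{closed form Psi}) and then to sum it \emph{exactly} (rather than asymptotically) over the edges of $\mathcal{T}_n^p$. From that closed form, the first part of $\Psi_n^p(a,b)$ equals $p$ whenever the initial block of $p$'s is nonempty, i.e.\ $b-a\geq p$, and equals $(b-a)\bmod p$ otherwise. Since edges of $\mathcal{T}_n^p$ satisfy $b-a\not\equiv 0 \bmod p$, the borderline value $b-a=p$ is excluded, so I can record
$$\texttt{first}(\Psi_n^p(a,b))=\begin{cases} b-a & \text{if } 1\leq b-a\leq p-1,\\ p & \text{if } b-a\geq p+1.\end{cases}$$

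Next I would split $\sum_{x\in\mi_n^p}\texttt{first}(x)$ along this dichotomy. The ``small $b-a$'' contribution reduces to the elementary polynomial sum $\sum_{k=1}^{p-1}k(n-k)$, and the ``large $b-a$'' contribution equals $p$ times the number of such edges, which is $a_p(n)$ minus the number of small-difference edges $\sum_{k=1}^{p-1}(n-k)$. Combining these yields an exact expression of the shape $p\,a_p(n)+c_1(p)\,n+c_0(p)$, with $c_0(p),c_1(p)$ explicit constants depending only on $p$.

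To finish, I would divide by $a_p(n)$, using the exact closed form of Eq.~(\ref{closed form a_p}) to expand $1/a_p(n)=\tfrac{2p}{(p-1)n^2}+O(n^{-4})$; this is sharp enough to produce the three claimed terms $p$, $-p^2/n$, and $p^2(p+1)/(3n^2)$ and to absorb everything else into $O(1/n^3)$. The one subtlety compared to the previous proposition is the finer target precision $O(1/n^3)$, for which Lemma~\ref{aux sum} is too blunt; but this is not a real obstacle, because $\texttt{first}$ takes only the finitely many values $1,2,\ldots,p$, so the relevant sums over edges can be computed in exact closed form rather than merely estimated.
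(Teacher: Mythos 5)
Your proposal is correct and follows essentially the same route as the paper: read $\texttt{first}$ off the closed form of $\Psi_n^p$, split the sum over edges according to whether $b-a<p$, evaluate both pieces \emph{exactly} (correctly noting that Lemma~\ref{aux sum} is too coarse for the $O(1/n^3)$ precision), and divide by $a_p(n)$. Your bookkeeping of the small-difference contribution by the value $k=b-a$ (with $n-k$ pairs each) is merely a tidier evaluation of the same double sum the paper computes by iterating over $a$ and then $b$, and it yields the same exact expression $p\bigl(a_p(n)-\tfrac{(p-1)n}{2}+\tfrac{(p-1)(p+1)}{6}\bigr)$.
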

\begin{proof}
    From Eq. (\ref{closed form Psi}), for every $a\not\equiv b\bmod{p}$ we have
    \begin{align*}
        \texttt{first}(\Psi_n^p(a,b))=\left\{\begin{array}{cl}
            p & \mbox{ if } b-a\geq p \\
            (b-a)\bmod{p} & \mbox{ otherwise.} 
        \end{array}\right.
    \end{align*}
    The sum of $\texttt{first}(x)$ over all compositions $x\in\mi_n^p$ is then
    \begin{align*}
        \sum_{x\in\mi_n^p}\texttt{first}(x)&=\sum_{\substack{1\leq a<b\leq n\\a\not\equiv b\bmod{p}}}\texttt{first}(\Psi_n^p(a,b))=\sum_{\substack{1\leq a<b\leq n\\a\not\equiv b\bmod{p}\\ b-a\geq p}}p+\sum_{\substack{1\leq a<b\leq n\\a\not\equiv b\bmod{p}\\ b-a<p}}(b-a)\bmod{p}.
    \end{align*}
    Let us first compute the following sum when $n\geq p-1$.
    \begin{align}
        \sum_{\substack{1\leq a<b\leq n\\a\not\equiv b\bmod{p}\\ b-a<p}}1&=\sum_{a=1}^{n-p+1}\sum_{b=a+1}^{a+p-1}1+\sum_{a=n-p+2}^{n-1}\sum_{b=a+1}^{n}1=\sum_{a=1}^{n-p+1}(p-1)+\sum_{a=n-p+2}^{n-1}(n-a)\\
        &=(n-p+1)(p-1)+\sum_{a=1}^{p-2}a=(n-p+1)(p-1)+\frac{(p-2)(p-1)}{2}.\label{nb b-a<p}
    \end{align}
    We deduce
    \begin{align*}
        \sum_{\substack{1\leq a<b\leq n\\a\not\equiv b\bmod{p}\\ b-a\geq p}}p=\sum_{\substack{1\leq a<b\leq n\\a\not\equiv b\bmod{p}}}p-\sum_{\substack{1\leq a<b\leq n\\a\not\equiv b\bmod{p}\\ b-a<p}}p=p\left(a_p(n)-(n-p+1)(p-1)-\frac{(p-2)(p-1)}{2}\right).
    \end{align*}
    Now observe that if $1\leq a<b\leq n$ with $b-a<p$, then $b-a=(b-a)\bmod{p}$. Then for $n\geq p-1$ we have
    \begin{align*}
        &\sum_{\substack{1\leq a<b\leq n\\a\not\equiv b\bmod{p}\\ b-a<p}}(b-a)\bmod{p}=\sum_{\substack{1\leq a<b\leq n\\a\not\equiv b\bmod{p}\\ b-a<p}}(b-a)=\sum_{a=1}^{n-p+1}\sum_{b=a+1}^{a+p-1}(b-a)+\sum_{a=n-p+2}^{n-1}\sum_{b=a+1}^{n}(b-a)\\
        &=\sum_{a=1}^{n-p+1}\left[\frac{(a+p)(a+p-1)-a(a+1)}{2}-(p-1)a\right]+\sum_{a=n-p+2}^{n-1}\left[\frac{n(n+1)-a(a+1)}{2}-(n-a)a\right]\\
        &=\sum_{a=1}^{n-p+1}\frac{p(p-1)}{2}+\sum_{a=n-p+2}^{n-1}\left[\frac{(n-a)^2+n-a}{2}\right]\\
        &=\frac{p(p-1)}{2}\left(n-p+1+\frac{p-2}{3}\right).
    \end{align*}
    Finally, 
    \begin{align*}
        \sum_{x\in\mi_n^p}\texttt{first}(x)&=p\left(a_p(n)-(n-p+1)(p-1)-\frac{(p-2)(p-1)}{2}\right)+\frac{p(p-1)}{2}\left(n-p+1+\frac{p-2}{3}\right)\\
        &=p\left(a_p(n)-n\cdot\frac{p-1}{2}+\frac{(p-1)(p+1)}{6}\right).
    \end{align*}
    Using that $a_p(n)=\frac{(p-1)n^2}{2p}+O(1)$, we deduce
    \begin{align*}
        \frac{1}{|\mi_n^p|}\sum_{x\in\mi_n^p}\texttt{first}(x)=p\left(1-\frac{p}{n}+\frac{p(p+1)}{3n^2}+O\left(\frac{1}{n^3}\right)\right).
    \end{align*}
\end{proof}

\begin{remark}
    It follows from \cite{malandro,chinn-heu} that the average value of $\texttt{first}$ (which, by symmetry, is the same as the average value of any part) on $\mathbb{F}_n^p$ is asymptotically $\phi_pQ_p'(\phi_p)$, as defined in Remark \ref{rk parts}. We see that on $\mi_n^p$, the symmetry no longer holds, and such compositions have on average the maximum possible part as first part.
\end{remark}

\subsection{The number of weak records} For $x=(x_1,\ldots,x_m)\in\mathbb{F}_n^p$, a \textit{weak record} is a position $1\leq j\leq m$ such that $x_i\leq x_j$ for each $1\leq i\leq j$. We denote by $\texttt{wrec}(x)$ the number of weak records of $x$. The statistic $\texttt{wrec}$ on compositions is studied in \cite{knof-mans}, here we compute its average value on $\mi_n^p$.

\begin{proposition}
The average number of weak records of a composition in $\mi_n^p$ satisfies as $n\to\infty$
    \begin{align*}
        \frac{1}{|\mi_n^p|}\sum_{x\in\mi_n^p}\texttt{\emph{wrec}}(x)\sim\frac{2n}{3p}.
    \end{align*}
\end{proposition}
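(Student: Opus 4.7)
The plan is to leverage the closed form of $\Psi_n^p(a,b)$ from Eq.~(\ref{closed form Psi}) to express $\texttt{wrec}(\Psi_n^p(a,b))$ as an explicit function of $a$ and $b$, and then to reduce the resulting sum to the type handled by Lemma~\ref{aux sum}.

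First, I would analyze $\texttt{wrec}(\Psi_n^p(a,b))$ using the block structure of the composition. The key observation is that once the running maximum of a composition reaches $p$, only further occurrences of $p$ can be weak records. I split into two cases. \textbf{Case A:} $b - a \geq p$, so $k_1 := \lfloor(b-a)/p\rfloor \geq 1$. Then $\Psi_n^p(a,b)$ begins with $k_1$ copies of $p$, each a weak record, after which the running maximum equals $p$. Every subsequent entry is either a $p$ (a weak record) or strictly less than $p$ (not a weak record), and there are exactly $k_2 := \lfloor(n-b+1)/p\rfloor$ further $p$s in the composition. Therefore $\texttt{wrec}(\Psi_n^p(a,b)) = k_1 + k_2$. \textbf{Case B:} $b - a < p$. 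By Eq.~(\ref{nb b-a<p}), only $O(n)$ pairs $(a,b)$ fall into this case; since $\texttt{wrec}(x)$ is bounded by the number of parts of $x$, which is at most $n$, the total contribution from Case B to $\sum_{x \in \mi_n^p} \texttt{wrec}(x)$ is $O(n^2)$. Divided by $|\mi_n^p| = \Theta(n^2)$, this contributes only $O(1)$ and is absorbed into the error.

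Next I would evaluate the dominant sum coming from Case A. Writing $k_1 + k_2 = (n-a+1)/p + O(1)$, we obtain
$$\sum_{x\in\mi_n^p}\texttt{wrec}(x) = \frac{1}{p}\sum_{\substack{1\leq a<b\leq n \\ a\not\equiv b\bmod{p}}}(n+1-a) + O(n^2).$$
Applying Lemma~\ref{aux sum} to evaluate $\sum a$ over the same index set, together with the closed form $a_p(n) \sim \frac{(p-1)n^2}{2p}$, the right-hand side simplifies to $\frac{(p-1)n^3}{3p^2} + O(n^2)$. Dividing by $|\mi_n^p|$ then yields the claimed asymptotic $\frac{2n}{3p}$.

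The main obstacle is the careful combinatorial case analysis for weak records in Case A: one must verify that the boundary subcases, where any of $r_1 = (b-a)\bmod p$, $a - 1$, $k_2$, or $r_2 = (n-b+1)\bmod p$ equals zero (so that the corresponding block is absent), do not disturb the clean count $k_1 + k_2$. Once this is settled, the remainder of the argument is a routine application of Lemma~\ref{aux sum} together with the asymptotics of $a_p(n)$.
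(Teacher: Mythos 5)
Your proposal is correct and follows essentially the same route as the paper: the identity $\texttt{wrec}(\Psi_n^p(a,b))=\lfloor\frac{b-a}{p}\rfloor+\lfloor\frac{n-b+1}{p}\rfloor$ for $b-a\geq p$, the $O(n^2)$ bound on the contribution of pairs with $b-a<p$ via Eq.~(\ref{nb b-a<p}), and the reduction to $\frac{1}{p}\sum(n-a)$ evaluated by Lemma~\ref{aux sum}. Your explicit justification that after the leading block of $p$'s only further occurrences of $p$ can be weak records is a welcome detail that the paper leaves implicit.
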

\begin{proof}
    From Eq. (\ref{closed form Psi}), we have that 
    \begin{align*}
        \texttt{wrec}(\Psi_n^p(a,b))=\left\lfloor\frac{b-a}{p}\right\rfloor+\left\lfloor\frac{n-b+1}{p}\right\rfloor
    \end{align*}
    when $b-a\geq p$. We do not compute it in details for $b-a<p$ as we shall see the contribution of those pairs $\{a,b\}$ to the total number of weak records in $\mi_n^p$ is negligible. Indeed, we saw in Eq. (\ref{nb b-a<p}) that the number of those pairs is $O(n)$, and since $\texttt{wrec}(\Psi_n^p(a,b))\leq n$ for every pair $\{a,b\}$ we have that
    \begin{align*}
        \sum_{\substack{1\leq a<b\leq n\\a\not\equiv b\bmod{p}\\ b-a<p}}\texttt{wrec}(\Psi_n^p(a,b))=O(n^2).
    \end{align*}
    Then,
    \begin{align*}
        \sum_{\substack{1\leq a<b\leq n\\a\not\equiv b\bmod{p}}}\texttt{wrec}(\Psi_n^p(a,b))&=\sum_{\substack{1\leq a<b\leq n\\a\not\equiv b\bmod{p}\\ b-a\geq p}}\left(\left\lfloor\frac{b-a}{p}\right\rfloor+\left\lfloor\frac{n-b+1}{p}\right\rfloor\right)+O(n^2)\\
        &=\frac{1}{p}\sum_{\substack{1\leq a<b\leq n\\a\not\equiv b\bmod{p}\\ b-a\geq p}}(n-a)+O(n^2).
    \end{align*}
    Using Eq. (\ref{nb b-a<p}) and Lemma \ref{aux sum}, we have
    \begin{align*}
        \sum_{\substack{1\leq a<b\leq n\\a\not\equiv b\bmod{p}\\ b-a\geq p}}a=\sum_{\substack{1\leq a<b\leq n\\a\not\equiv b\bmod{p}}}a-O(n^2)=\frac{na_p(n)}{3}-O(n^2).
    \end{align*}
    Finally,
    \begin{align*}
        \sum_{\substack{1\leq a<b\leq n\\a\not\equiv b\bmod{p}}}\texttt{wrec}(\Psi_n^p(a,b))=\frac{1}{p}\left(na_p(n)-\frac{na_p(n)}{3}\right)+O(n^2)=\frac{2na_p(n)}{3p}+O(n^2),
    \end{align*}
    which concludes.
\end{proof}

\begin{remark}
    In \cite{knof-mans} it is proved that the average value of $\texttt{wrec}$ on the set of all compositions of $n$ is asymptotically $\log_2{n}$, so this means that compositions from $\mi_n^p$ have on average much more weak records than regular compositions.
\end{remark}

\section*{Acknowledgment}

The author is grateful to the referees for their helpful and careful review. This research was funded, in part, by the Agence Nationale de la Recherche (ANR),
grants ANR-22-CE48-0002 (PICS), ANR-25-CE48-0602 (COMETA-GAE) and by the Regional Council of Bourgogne-Franche-Comté.

\footnotesize

\end{document}